\documentclass[12pt]{article}
\usepackage{latexsym,amsmath,amssymb,, amsthm}

     \addtolength{\textwidth}{2cm} \addtolength{\hoffset}{-1cm}
     \addtolength{\textheight}{3cm} \addtolength{\voffset}{-1.5cm}
\newif\ifdviwin

\usepackage[latin1]{inputenc}
\usepackage[english]{babel}
\usepackage{indentfirst}
\usepackage[mathscr]{eucal}
\usepackage{amssymb,amsmath,amsfonts}
\usepackage{graphicx}

\newif\ifdviwin

\dviwintrue

\let\8=\infty \let\0=\emptyset  
  \let\por=\times

\let\con=\overline

\let\ro=\rho

\def\flecha{\rightarrow}
\def\esiz{\langle}
\def\esde{\rangle}

\def\cte.{\mathop{\rm cte.}\nolimits}

\def\Re{\mathop{\rm Re }\nolimits}

\def\cosh{\mathop{\rm cosh }\nolimits}

\def\L{\mathbb{L}}

\def\R{\mathbb{R}}

\def\C{\mathbb{C}}

\def\S{\mathbb{S}}

 \newtheorem{defi}{Definition}[section] \newtheorem{teo}[defi]{Theorem}
 
 \newtheorem{cor}[defi]{Corollary}
 
 \newtheorem{eje}[defi]{Example}
 \newtheorem{remark}{Remark}

\DeclareGraphicsExtensions{.png,.jpg,.pdf,.mps,.gif,.bmp}

\numberwithin{equation}{section}

\begin{document}
\mbox{}\vspace{0.4cm}\mbox{}

\begin{center}
\rule{15.2cm}{1.5pt} \vspace{0.5cm}

{\Large \bf The Cauchy problem for indefinite improper affine spheres and their Hessian equation \footnote{Research partially supported by Ministerio de Educaci\'on y Ciencia Grant No. MTM2010-19821 and Junta de Andaluc\'{\i}a Grant Nos. FQM-325 and P09-FQM-5088}} \\
\vspace{0.5cm}
{\large Francisco Milán}\\
\vspace{0.3cm} \rule{15.2cm}{1.5pt}
\end{center}
Departamento de Geometr\'{\i}a y Topolog\'{\i}a, Universidad de Granada, E-18071 Granada, Spain. \\
e-mail: milan@ugr.es
\vspace{0.2cm}

Keywords: Cauchy problem, helicoidal improper affine spheres, Hessian equation.

2000 Mathematics Subject Classification: 53A15.

\vspace{0.3cm}

\begin{abstract}
We give a conformal representation for indefinite improper affine spheres which solve the Cauchy problem for their Hessian equation. As consequences, we can characterize their geodesics and obtain a generalized symmetry principle. Then, we classify the helicoidal indefinite improper affine spheres and find a new family with complete non flat affine metric. Moreover, we present interesting examples with singular curves and isolated singularities.
\end{abstract}

\section{Introduction}
It is well-known the good interplay of differential equations and differential geometry. For instance, the theory of Monge-Amp\`{e}re equations has many geometric applications like surfaces of prescribed Gauss curvature or affine spheres. Conversely, the properties of these surfaces play an important role in the development of geometric methods for the study of their PDEs, see the recent works of Geometric Analysis \cite{L, LJSX, TW}.

Here, we consider the classical Monge-Amp\`{e}re equation $f_{xx} f_{yy} - f_{xy}^2 = \pm 1$ and the improper affine sphere, given locally by the graph of a solution $f(x, y)$, see \cite{LSZ, NS}. Of course, the situation changes completely if we take $+1$ or $-1$.

In the first case, we have the elliptic Hessian $+1$ equation and there are strong results about it. Actually, J\"{o}rgens \cite{J1, J2} proved that the revolution surfaces provide the only entire solutions with at most an isolated singularity to this equation. Then, as geometric consequence, the elliptic paraboloid is the only global example and the unique improper affine sphere with complete definite affine metric, see \cite{C1, P, CY}. Conversely, as the affine conormal of an improper affine sphere is harmonic with respect to the affine metric, we obtained in \cite{FMM1, FMM2} a conformal representation with many analytic and geometric applications, see \cite{ACG, CL, FMM3, GMM, GMMi, Gu, LX}.

The non elliptic case is more complicated and we can not expect a version of the above global results. In fact, $f(x, y) = x y + g(x)$ is a solution for any function $g$ and there are many ruled improper affine spheres, with complete flat affine metric, see \cite{MR}. Then, in order to find complete non flat examples, we give a conformal representation for indefinite improper affine spheres, which solve the Cauchy problem for the Hessian $-1$ equation and has many analytic and geometric applications. Also, in computer vision, it can give new algorithms for computing the area distance $f$ of a convex curve, see \cite{CST1}.

Hence, we are going to solve the following affine Bj\"{o}rling-Cauchy Problem:
\begin{quote}
Let $\alpha:I\longrightarrow \R^3 $ be a regular analytic curve such that $\alpha' \times \alpha'' \neq 0$. Find all indefinite improper affine spheres containing $\alpha$.
\end{quote}

This problem has been inspired by the classical Bj\"{o}rling problem for minimal surfaces in $\R^3$, solved by Schwarz in 1890 and generalized to other families of surfaces in \cite{ACG, ACM, B, CDM, DHKW, GMi}.

Thus, after some preliminaries, we study in Section 3 necessary and sufficient conditions for the existence and uniqueness of solutions to the above problem. We also construct the indefinite improper affine sphere in terms of the curve $\alpha $ and give new conformal representations. As first consequence, we solve explicitly the Cauchy problem for the Hessian $-1$ equation.

Section 4 is devoted to characterize when a  curve in $\R^3$ can be geodesic or pre-geodesic of an indefinite improper affine sphere and other applications, which motivate the study of symmetries. Then, in Section 5, we obtain all the helicoidal indefinite improper affine spheres, that is, they are invariant under a one-parameter group of equiaffine transformations. Here, we find our examples with complete non flat affine metric. Also, we see that the revolution examples do not provide entire solutions with at most an isolated singularity to the Hessian $-1$ equation.

Finally, in Section 6, we extend our conformal representation to indefinite improper affine maps with a singular set containing a prescribed analytic curve and propose it to continue the study of singularities, see \cite{CST2, IM, M, Mi, N}.

\section{Preliminaries}

Consider $\psi:\Sigma \flecha \R^3$ an indefinite improper affine sphere, that is, an immersion with constant affine normal $\xi$. Then, see \cite{LSZ, NS},
up to an equiaffine transformation, one has $\xi = (0, 0, 1)$ and $\psi$ can be locally seen as the graph of a solution
$f(x, y)$ of the Hessian $- 1$ equation
\begin{equation}
f_{xx} f_{yy} - f_{xy}^2 = - 1.  \label{hes}
\end{equation}

In such case, the affine conormal $N$ and the indefinite affine metric $h$ of $\psi$ are given by
\begin{eqnarray}
N & = & (- f_x, - f_y, 1), \nonumber \\
\label{conxy} \\
h & = & f_{xx} dx^2 + 2 f_{xy} dx dy + f_{yy} dy^2 \nonumber
\end{eqnarray}
and (\ref{hes}) is equivalent to
\begin{eqnarray}
\sqrt{h(\psi_x, \psi_y)^2 - h(\psi_x, \psi_x) h(\psi_y, \psi_y)} = [\psi_x, \psi_y, \xi] = - [N_x, N_y, N], \label{vol}
\end{eqnarray}
that is, the volume element of $h$ coincides with the determinant $[.,., \xi]$.

We also observe that $h = - \esiz dN, d\psi \esde$ and $N$ is determined by
\begin{equation}
\esiz N,\xi \esde = 1, \qquad \esiz N, d\psi\esde = 0, \label{proconormal}
\end{equation}
with the standard inner product $\esiz \  , \ \esde$ in $\R^3$. Moreover, from (\ref{hes}) and (\ref{conxy}), one can obtain
\begin{equation}
\Delta_h N = 0, \nonumber
\end{equation}
where $\Delta_h$ is the Laplace-Beltrami operator associated to $h$. Consequently, if $s$ and $t$ are conformal coordinates for $h$, then $N$ satisfies the homogeneous wave equation
\begin{equation}
N_{tt} - N_{ss} = 0. \nonumber
\end{equation}

Then, following \cite{CDM, IT}, we write the above affine invariants with the split-complex numbers
\begin{equation}
\C' = \{ z = s + j t : s, t \in \R, j^2 = 1, 1 j = j 1\} \nonumber
\end{equation}
and the partial derivatives
\begin{eqnarray}
\psi_z  = \frac{1}{2} \left( \psi_s + j \psi_t \right) , \qquad \psi_{\overline{z}}  = \frac{1}{2} \left( \psi_s - j \psi_t \right), \nonumber
\end{eqnarray}
with the usual notation $\overline{z} = s - j t$, $Re(z) = s$ and $Im(z) = t$.

In particular, see \cite{C2, FMM2} for the definite case with complex numbers, from (\ref{vol}) and  (\ref{proconormal}), we have
\begin{eqnarray}
h = 2 \ro \ dz \ d\overline{z}, \qquad \ro = \esiz N, \psi_{z\overline{z}} \esde = - j [\psi_z,\psi_{\overline{z}}, \xi] = j [N, N_z, N_{\overline{z}}] > 0 \label{ro}
\end{eqnarray}
and
\begin{eqnarray}
\xi  =  \frac{j}{\ro} N_z \por N_{\con{z}}, \qquad
N  =  \frac{- j}{\ro} \psi_z \por \psi_{\con{z}},  \label{conz}
\end{eqnarray}
where by $\por$ we denote the cross product in $\C'^3$. We also get
\begin{eqnarray}
\psi_z  =  j N_z \por N, \qquad
 N_z  =  j \xi \por \psi_z \label{isotermos}
\end{eqnarray}
and
\begin{eqnarray}
\psi_{z{\overline{z}}}  =  \rho \xi, \qquad
 N_{z{\overline{z}}}  =  0. \label{laplaz}
\end{eqnarray}

\begin{remark} \label{r1}
In general, the split-holomorphic curve $N_z : \Sigma \flecha \C'^3$ can be integrated locally. However, from (\ref{isotermos}), we have a global split-holomorphic curve $\Phi: \Sigma \flecha \C'^3$, such that
\begin{eqnarray}
N =  \Phi + \con{\Phi}, \qquad j \xi \por \psi =  \Phi - \con{\Phi}. \label{conormal}
\end{eqnarray}

Conversely, from (\ref{isotermos}), we recover the indefinite improper affine sphere $\psi$ with its affine conormal and the conformal  class of the affine metric as
\begin{equation}
\psi = 2\Re \int j N_z \por N dz,  \label{Lelieuvre}
\end{equation}
which, along with (\ref{proconormal}), (\ref{ro}) and (\ref{conormal}), give that $\psi$ is uniquely determined, up to a real translation, by a split-holomorphic curve $\Phi$ satisfying
\begin{eqnarray}
2 \esiz \Phi, \xi \esde = 1, \qquad j \left[\Phi + \con{\Phi}, \Phi_z, \con{\Phi_z}\ \right] > 0, \label{condicion}
\end{eqnarray}
for a non zero vector $\xi$. To be precise,
\begin{equation}
\psi = - 2\Re \int j \left(\Phi + \con{\Phi}\right)\por \Phi_z dz. \label{inmer}
\end{equation}
\end{remark}

In fact, from (\ref{condicion}) and (\ref{inmer}), we obtain
\begin{equation*}
\psi_z \por \psi_{\con{z}} = - \left[\ \Phi + \con{\Phi}, \Phi_z, \con{\Phi_z}\ \right] (\Phi + \con{\Phi}),
\end{equation*}
\begin{equation*}
\psi_{z\con{z}} =  j \Phi_z\por\con{\Phi_{z}} = j \left[\ \Phi + \con{\Phi}, \Phi_z, \con{\Phi_z}\ \right] \xi
\end{equation*}
and
\begin{equation*}
\esiz \Phi + \con{\Phi}, \psi_{z\overline{z}} \esde = j \left[\ \Phi + \con{\Phi}, \Phi_z, \con{\Phi_z}\ \right] = - j [\psi_z,\psi_{\overline{z}}, \xi] > 0.
\end{equation*}
Thus, from (\ref{ro}), (\ref{conz}) and (\ref{laplaz}), the immersion $\psi$ has constant affine normal $\xi$ and it is an indefinite improper affine sphere, with affine conormal $\Phi + \con{\Phi}$.

\section{The indefinite Affine Bj\"{o}rling problem}
The above conformal representation in terms of split-holomorphic data, let us
solve the \textit{"affine Bj\"{o}rling problem"} of finding indefinite improper affine spheres containing a regular analytic curve
with a prescribed affine conormal along it.

Now, in order to deduce the necessary conditions, we take an indefinite improper affine sphere $\psi:\Sigma \flecha \R^3$
with constant affine normal $\xi$ and affine conormal $N$. Let $I$ be an interval and $\beta : I \flecha \Sigma$
a regular analytic curve. If $\alpha = \psi \circ \beta$ and $U = N \circ \beta$, then (\ref{proconormal}) gives
\begin{equation}
\left. \begin{array}{rll}
0 &=& \esiz \alpha', U\esde, \\
1 &=& \esiz \xi, U\esde,\\
\lambda & = & \esiz \alpha'', U\esde = - \esiz \alpha', U'\esde,
\end{array}\right\} \label{admisible}
\end{equation}
where by prime we indicate derivation respect to $s$, for all $s \in I$.

\begin{remark}
The problem for $\lambda = 0$ was solved by Blaschke in \cite{B} with asymptotic parameters. In particular, he classified the family of ruled improper affine spheres, see also \cite{MM, MR}.

So, we will consider $\lambda > 0$ and a conformal parameter $z = s + j t$, (or $\lambda < 0$ and $jz$). Here, it is clear that $\alpha' \times \alpha''$ does not vanish anywhere.
\end{remark}

Motivated by these facts, we say that a pair of regular analytic curves $\alpha, U : I \flecha \R^3$ is \textit{admissible} for a non zero vector $\xi$ if there is an analytic positive function $\lambda :I \flecha \R^+$ such that all the  equations in (\ref{admisible}) hold on $I$.

Conversely, we prove that the above conditions are also sufficient to find a unique solution of the \textit{"affine Bj\"{o}rling problem"}.

\begin{teo} \label{t1} Let $\alpha, U : I \flecha \R^3$ be an admissible pair of curves for a non zero vector $\xi$. Then there exists  a unique indefinite improper affine sphere $\psi$ containing $\alpha(I)$ with affine conormal $U(s)$ at $\alpha(s)$ for all $s \in I$ and affine normal $\xi$.
\end{teo}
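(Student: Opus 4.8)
The plan is to reduce the statement to the conformal representation of Remark~\ref{r1}: out of the admissible pair $(\alpha,U)$ I will manufacture a single split-holomorphic curve $\Phi$ obeying (\ref{condicion}), and then let the converse construction of Section~2 produce the surface. The guiding observation is (\ref{conormal}): on the prescribed curve any solution must satisfy $N=\Phi+\con{\Phi}=U$ and $j\xi\por\psi=\Phi-\con{\Phi}=j\xi\por\alpha$. Since $\alpha$ and $U$ are real analytic, each of their components extends to a split-holomorphic function of $z=s+jt$ on a neighborhood $\Sigma$ of $I\times\{0\}$ in $\C'$, by substituting $z$ for $s$ in the convergent series; keeping the same names for these extensions, I set
\[
\Phi(z)=\frac{1}{2}\left(U(z)+j\,\xi\por\alpha(z)\right),
\]
so that $\Phi+\con{\Phi}=U$ and $\Phi-\con{\Phi}=j\xi\por\alpha$ hold on $t=0$.

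Next I would verify (\ref{condicion}). The linear condition is immediate: $\esiz\xi\por\alpha,\xi\esde\equiv0$ as an analytic identity, while $\esiz U,\xi\esde=1$ on $I$ by (\ref{admisible}); as these are equalities of split-holomorphic functions valid on the real axis, the identity principle gives $2\esiz\Phi,\xi\esde=1$ throughout $\Sigma$. For the positivity condition I evaluate on $t=0$, where $\Phi+\con{\Phi}=U$, $\Phi_z=\frac{1}{2}(U'+j\,\xi\por\alpha')$ and $\con{\Phi_z}=\frac{1}{2}(U'-j\,\xi\por\alpha')$. Expanding the determinant by multilinearity, discarding the repeated columns, and applying $\esiz a\por b,c\por d\esde=\esiz a,c\esde\esiz b,d\esde-\esiz a,d\esde\esiz b,c\esde$ together with the relations $\esiz U,\xi\esde=1$, $\esiz U,\alpha'\esde=0$, $\esiz U',\xi\esde=0$, $\esiz U',\alpha'\esde=-\lambda$ from (\ref{admisible}), I obtain
\[
j\left[\Phi+\con{\Phi},\Phi_z,\con{\Phi_z}\right]\Big|_{t=0}=\frac{\lambda}{2}>0 .
\]
By continuity this remains positive after shrinking $\Sigma$, so (\ref{condicion}) holds.

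With (\ref{condicion}) in force, the converse of Remark~\ref{r1} delivers an indefinite improper affine sphere $\psi$, defined by (\ref{inmer}), with affine normal $\xi$, affine conormal $N=\Phi+\con{\Phi}$, and $z$ as a conformal parameter. On $t=0$ we get $N=U$, so the conormal is correct. To see that $\psi$ runs through $\alpha$, note that (\ref{conormal}) forces $j\xi\por\psi-(\Phi-\con{\Phi})$ to be $z$-constant; both terms are orthogonal to $\xi$ (because $\esiz\Phi-\con{\Phi},\xi\esde=0$), so the real translation left free in (\ref{inmer}) can be used to make this constant vanish. Then on the curve $\xi\por(\psi-\alpha)=0$, i.e. $\psi-\alpha=c(s)\xi$; pairing $\psi'-\alpha'$ with $U$ and using $\esiz N,\psi_s\esde=0$ from (\ref{proconormal}) and $\esiz\alpha',U\esde=0$ gives $c'\esiz\xi,U\esde=c'=0$, and a final translation along $\xi$ yields $\psi(s,0)=\alpha(s)$.

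For uniqueness, any solution $\tilde\psi$ carries the curve as a non-null arc, since $h(\beta',\beta')=-\esiz U',\alpha'\esde=\lambda>0$ is intrinsic; hence one may choose a conformal parameter in which the curve is the real axis parametrized by $s$ itself, the conformal factor being forced to $\rho=\lambda/2$ along it. In that parameter $\tilde\psi$ produces a split-holomorphic datum $\tilde\Phi$ satisfying (\ref{conormal}), whose restriction to $t=0$ is pinned to $\frac{1}{2}(U+j\,\xi\por\alpha)$ by $N=U$ and $\psi=\alpha$; the identity principle gives $\tilde\Phi=\Phi$ and therefore $\tilde\psi=\psi$. The main obstacle I anticipate is precisely this interface with the conformal parametrization: verifying that admissibility with $\lambda>0$ simultaneously controls the open positivity condition in (\ref{condicion}) and normalizes the $1$-jet of the conformal structure along the curve so that the given parameter $s$ coincides with the real axis, making the identity-principle argument legitimate.
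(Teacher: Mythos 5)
Your proposal is correct and follows essentially the same route as the paper: both define $\Phi=\tfrac12(U+j\,\xi\por\alpha)$ via split-holomorphic extension, verify $2\esiz\Phi,\xi\esde=1$ and $j[\Phi+\con{\Phi},\Phi_z,\con{\Phi_z}]=\lambda/2>0$ from (\ref{admisible}), invoke the converse construction (\ref{inmer}), and prove uniqueness by normalizing the conformal parameter along the curve and applying analyticity. The only (cosmetic) difference is that the paper checks $\alpha(I)\subset\psi$ by computing $\psi_z=\tfrac12\alpha'-\tfrac{j}{2}U\por U'$ directly along $t=0$, whereas you argue via the constancy of $j\xi\por\psi-(\Phi-\con{\Phi})$ and an adjustment of translations.
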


\begin{proof}
If we assume that $\psi$ is a solution, by the inverse function theorem, there exists a conformal parameter $z = s + j t$ for the affine metric of $\psi$, defined in a split-complex domain containing $I$.

Then, from (\ref{conormal}), $\Phi = \frac{1}{2} (N + j \xi \por \psi)$ is a split-holomorphic curve such that
\begin{eqnarray*}
\Phi(s) = \frac{1}{2} (U(s) + j \xi \por \alpha(s)), \qquad s \in I
\end{eqnarray*}
and by analyticity we have that
\begin{eqnarray}
\Phi(z) = \frac{1}{2} (U(z) + j \xi \por \alpha(z)), \qquad z \in \Omega \subset \C', \label{phi}
\end{eqnarray}
in a domain $\Omega$ containing $I$, where the split-holomorphic extensions of $U$ and $\alpha$ exist.

Thus, from (\ref{conormal}), (\ref{Lelieuvre}) and (\ref{phi}), the immersion $\psi$ is determined by $U$ and $\alpha$, which proves the uniqueness.

For the existence, we consider the above split-holomorphic curve. Now, from (\ref{admisible}) and (\ref{phi}), we have
\begin{eqnarray*}
2 \esiz \Phi, \xi \esde = \esiz U, \xi \esde = 1
\end{eqnarray*}
in $\Omega$ (by analyticity) and
\begin{eqnarray*}
 j \left[\ \Phi + \con{\Phi},
  \Phi_z, \con{\Phi_z}\ \right] & = & \frac{j}{4}\left[U, U' + j \xi \por \alpha',
  U' - j \xi \por \alpha' \right]\nonumber\\& = & -\frac{1}{2} \esiz U \por
 U', \xi \por \alpha' \esde = \frac{\lambda}{2} > 0
\end{eqnarray*}
along $\alpha$. Then, from (\ref{condicion}) and (\ref{inmer}),
\begin{equation}
\psi =  \alpha(s_0) - 2 \Re \int_{s_0}^z j(\Phi + \con{\Phi})\por
\Phi_\zeta d\zeta, \qquad s_0 \in I, \label{maximm}
\end{equation}
is an indefinite improper affine sphere, in a neighborhood of $I$, with affine normal $\xi$ and affine conormal $\Phi + \con{\Phi}$
which is an extension of  $U(I)$. Finally, along $\alpha$,
\begin{eqnarray*}
 \psi_{z} & = &  - j (\Phi + \con{\Phi}) \por \Phi_z =  \frac{- j}{2} \ U \por U' - \frac{1}{2} \ U \por \left( \xi \por \alpha'\right) \\
  & = &  \frac{1}{2} \ \alpha' -  \frac{j}{2}\
  U \por U'
\end{eqnarray*}
and the immersion contains the curve $\alpha(I)$.
\end{proof}

From now on, without loss of generality, we will assume that, up an equiaffine transformation, the affine normal is $\xi = (0, 0, 1)$. Thus, the Theorem \ref{t1} gives a conformal representation in terms of two split-holomorphic functions.

\begin{teo} \label{t2}
Let $\alpha, U : I \flecha \R^3$ be an admissible pair of curves for $\xi = (0, 0, 1)$. Then the unique indefinite improper affine sphere $\psi = (\psi_1, \psi_2, \psi_3)$ containing $\alpha(I)$ with affine conormal $U(s)$ at $\alpha(s)$ for all $s \in I$ is given by
\begin{eqnarray*}
N & = & Re(2 \Phi_1, 2 \Phi_2, 1),\\
(\psi_1, \psi_2) & = & Im(2 \Phi_2, -2 \Phi_1),\\
- \psi_{3z} & = & \psi_{1z} N_1 + \psi_{2z} N_2,
\end{eqnarray*}
with $2 (\Phi_1, \Phi_2) = (U_1 - j \alpha_2, U_2 + j \alpha_1)$ in a neighborhood of $I$ in $\C'$.
\end{teo}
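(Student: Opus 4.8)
The plan is to read the three coordinate equations directly off Theorem~\ref{t1}, specializing the split-holomorphic data to $\xi = (0,0,1)$ and unpacking the relations in (\ref{conormal}) and (\ref{proconormal}) component by component. Existence and uniqueness are already supplied by Theorem~\ref{t1}, so only the explicit formulas remain to be established.

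First I would fix the split-holomorphic curve $\Phi$ constructed in the proof of Theorem~\ref{t1}, i.e. the analytic extension of $\Phi = \frac{1}{2}(U + j\,\xi\por\alpha)$. With $\xi = (0,0,1)$ one computes $\xi\por\alpha = (-\alpha_2, \alpha_1, 0)$, so that $2\Phi = (U_1 - j\alpha_2,\ U_2 + j\alpha_1,\ U_3)$ along $I$ and, by analytic continuation, in a split-complex neighborhood of $I$; this is exactly the stated identity $2(\Phi_1,\Phi_2) = (U_1 - j\alpha_2,\ U_2 + j\alpha_1)$. I would also record that the first condition in (\ref{condicion}), $2\esiz\Phi,\xi\esde = 1$, forces $2\Phi_3 \equiv 1$ since $\esiz\Phi,\xi\esde = \Phi_3$ (consistently with the admissibility relation $\esiz\xi,U\esde = 1$).

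Next, the conormal formula follows from $N = \Phi + \con{\Phi} = 2\,\Re\,\Phi$ in (\ref{conormal}): taking real parts componentwise and using $2\Phi_3 \equiv 1$ gives $N = \Re(2\Phi_1, 2\Phi_2, 1)$. For the position, I would use the second identity in (\ref{conormal}), $j\,\xi\por\psi = \Phi - \con{\Phi} = 2j\,\Im\,\Phi$; multiplying by $j$ and recalling $j^2 = 1$ yields $\xi\por\psi = 2\,\Im\,\Phi$. Since $\xi\por\psi = (-\psi_2, \psi_1, 0)$ and $\Im\Phi_3 = 0$, comparison of the first two components gives $\psi_1 = \Im(2\Phi_2)$ and $\psi_2 = \Im(-2\Phi_1)$, that is $(\psi_1,\psi_2) = \Im(2\Phi_2, -2\Phi_1)$.

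Finally, the third coordinate is not fixed algebraically by $\Phi$ but through the orthogonality $\esiz N, \psi_z\esde = 0$ in (\ref{proconormal}). Written out with $N_3 = 1$ this reads $N_1\psi_{1z} + N_2\psi_{2z} + \psi_{3z} = 0$, which is precisely $-\psi_{3z} = \psi_{1z}N_1 + \psi_{2z}N_2$; the already-known $N_1,N_2,\psi_1,\psi_2$ then recover $\psi_3$ by a single split-holomorphic integration, in agreement with (\ref{Lelieuvre}). The computation is essentially bookkeeping, and the only points demanding care are the split-complex conventions — how $\Re$, $\Im$, and the conjugation interact with $j$ (so that $\Phi - \con{\Phi} = 2j\,\Im\,\Phi$ rather than $2\,\Im\,\Phi$) — together with the signs in the cross products $\xi\por\alpha$ and $\xi\por\psi$, which are exactly what produce the index swap and the sign appearing in the formula for $(\psi_1,\psi_2)$.
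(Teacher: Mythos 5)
Your proposal is correct and follows essentially the same route as the paper, whose proof is just the one-line citation of (\ref{conormal}), (\ref{phi}) and (\ref{maximm}): you take the curve $\Phi=\tfrac12(U+j\,\xi\por\alpha)$ from the proof of Theorem~\ref{t1} and unpack $N=\Phi+\con{\Phi}$, $j\,\xi\por\psi=\Phi-\con{\Phi}$ and $\esiz N,\psi_z\esde=0$ componentwise for $\xi=(0,0,1)$. The sign and index bookkeeping ($\xi\por\alpha=(-\alpha_2,\alpha_1,0)$, $\Phi-\con{\Phi}=2j\,\Im\Phi$, $2\Phi_3\equiv 1$) all checks out.
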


\begin{proof}
It is clear from (\ref{conormal}), (\ref{phi}) and (\ref{maximm}).
\end{proof}

Finally, our conformal representation let us to solve the Cauchy problem for the Hessian $- 1$ equation
\begin{equation}
\left\{ \begin{array}{lll}
f_{xx} f_{yy} - f_{xy}^2  =  - 1, \\
f(x, 0)  =  a(x), \qquad a''(x) > 0,\\
f_y(x, 0)  =  b(x),
\end{array} \right. \label{Cauchy}
\end{equation}
where $a, b$ are two analytic functions defined on an interval $I$. Of course, if $a''(x) < 0$, we can change $f$ by $-f$.

\begin{teo} \label{t3}
There exits a unique solution to the Cauchy problem (\ref{Cauchy}) in a neighborhood of $I$ in $\R^2$ given by
\begin{equation*}
f(x, y) =  a(x_0) + \frac{1}{2} \Re \int_{x_0}^z \left( (a'(\zeta) + \overline{a'(\zeta)})(1 - j b'(\zeta)) + a''(\zeta)(j b(\zeta) + j \overline{b(\zeta)} - \zeta + \overline{\zeta}) \right) d\zeta,
\end{equation*}
with
\begin{equation*}
x(z) = Re(z) - Im(b(z)), \qquad y(z) = Im(a'(z)),
\end{equation*}
where $a(z)$ and $b(z)$ are the split-holomorphic extensions of $a(x)$ and $b(x)$, respectively, and $x_0 \in I$.
\end{teo}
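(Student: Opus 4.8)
The plan is to realize the Cauchy problem (\ref{Cauchy}) as an instance of the affine Bj\"orling problem and then invoke Theorem \ref{t2}. Since every indefinite improper affine sphere is locally the graph of a solution of (\ref{hes}), I would first encode the Cauchy data as a pair of curves along $y=0$. Writing the sought surface as the graph $\psi(x,y)=(x,y,f(x,y))$ and recalling from (\ref{conxy}) that $N=(-f_x,-f_y,1)$, the trace over $I$ and its conormal are forced to be
\[
\alpha(s)=(s,0,a(s)),\qquad U(s)=(-a'(s),-b(s),1),\qquad s\in I .
\]
The first task is to verify that $(\alpha,U)$ is admissible for $\xi=(0,0,1)$ in the sense of (\ref{admisible}). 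With $\alpha'=(1,0,a')$ and $\alpha''=(0,0,a'')$ one computes directly $\esiz\alpha',U\esde=0$, $\esiz\xi,U\esde=1$ and $\esiz\alpha'',U\esde=a''$, so the standing hypothesis $a''>0$ is exactly the positivity $\landa=a''>0$ demanded by admissibility.

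Once this is in place, Theorem \ref{t2} yields a unique indefinite improper affine sphere containing $\alpha(I)$ with conormal $U$ and affine normal $\xi$. Feeding $\alpha,U$ into $2(\Phi_1,\Phi_2)=(U_1-j\alpha_2,U_2+j\alpha_1)$ gives the split-holomorphic data $2\Phi_1=-a'(z)$ and $2\Phi_2=-b(z)+jz$, where $a(z),b(z)$ are the split-holomorphic extensions guaranteed by analyticity, as in (\ref{phi}). I would then read off the first two coordinates from $(\psi_1,\psi_2)=\Im(2\Phi_2,-2\Phi_1)$: using the identities $\Im(jz)=\Re(z)$ and $\Im(z)=t$, this produces $\psi_1=\Re(z)-\Im(b(z))=x(z)$ and $\psi_2=\Im(a'(z))=y(z)$. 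Hence the surface is a graph over the stated $(x,y)$, and along $I$ (where $a,b$ are real) the substitution degenerates to $x=s$, $y=0$; the positivity $\landa=a''>0$ keeps the immersion regular, so $z\mapsto(x,y)$ is a local diffeomorphism near $I$ and $(x,y)$ are genuine coordinates.

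It then remains to recover $f=\psi_3$ by integrating the third relation of Theorem \ref{t2}, namely $-\psi_{3z}=\psi_{1z}N_1+\psi_{2z}N_2$ with $N=\Re(2\Phi_1,2\Phi_2,1)$. Computing the split derivatives $\psi_{1z}=\tfrac12(1-jb'(z))$, $\psi_{2z}=\tfrac{j}{2}a''(z)$ and the components $N_1=-\Re(a'(z))$, $N_2=-\Re(b(z))+\Im(z)$, one collapses the right-hand side to
\[
f_z=\tfrac14\Big[(a'+\con{a'})(1-jb')+a''\big(jb+j\con{b}-z+\con{z}\big)\Big].
\]
Because $f$ is real, $f=f(x_0,0)+2\Re\int_{x_0}^z f_\zeta\,d\zeta$, and with $f(x_0,0)=a(x_0)$ the factor $2\cdot\tfrac14=\tfrac12$ reproduces precisely the claimed integral formula. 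Uniqueness is inherited from Theorem \ref{t1}: the affine sphere through the admissible pair $(\alpha,U)$ is unique, so its graph function $f$ is unique as well.

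I expect the only genuinely delicate point to be the bookkeeping in this last step --- keeping track of the split-complex rules $j^2=1$, $1/j=j$, $\con{a'(z)}=a'(\con z)$ and the $\Re/\Im$ conventions so that the integrand simplifies exactly to the stated expression --- together with the verification in the second paragraph that $(x,y)$ really furnishes a chart near $I$. Everything else is a routine transcription of Theorems \ref{t1}--\ref{t2} into the graph setting.
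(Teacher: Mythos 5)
Your proposal is correct and follows exactly the paper's route: the paper's proof consists precisely of exhibiting the admissible pair $\{\alpha(s)=(s,0,a(s)),\ U(s)=(-a'(s),-b(s),1)\}$ and invoking Theorem \ref{t2}. You simply carry out in detail the computations (admissibility with $\lambda=a''>0$, the data $2\Phi_1=-a'(z)$, $2\Phi_2=-b(z)+jz$, and the integration of $\psi_{3z}$) that the paper leaves implicit, and these check out.
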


\begin{proof}
The result follows from Theorem \ref{t2}, since the problem (\ref{Cauchy}) is equivalent to find the indefinite improper affine sphere with the admissible pair of curves
\begin{equation*}
\{\alpha(s) = (s, 0, a(s)), \ U(s) = (-a'(s), -b(s), 1)\},
\end{equation*}
for $s \in I$.
\end{proof}

\section{Some consequences}

First, we use that for an admissible pair $\{\alpha ,U\}$, the curve $U$ is determined by $\alpha$ and the affine metric $h$ along $\alpha$, when $[\alpha', \alpha'', \xi] \neq 0$ on $I$.

In the case $[\alpha', \alpha'', \xi] \equiv 0$ there is not uniqueness, since $\{\alpha ,U + \mu \xi \times \alpha' \}$ is also an admissible pair, for any analytic function $\mu$.

\begin{teo}\label{t4}
Let $\alpha : I \flecha \R^3$ be a regular analytic curve satisfying
\begin{equation*}
[\alpha'(s), \alpha''(s), \xi] \neq 0
\end{equation*}
for all $s \in I$. Then, given a positive analytic function $\lambda:I\flecha \R^+$, there exists a
unique indefinite improper affine sphere $\psi$ containing the curve $\alpha(I)$, with $h(\alpha',\alpha') = \lambda$ on $I$.

Moreover,  the immersion $\psi$ can be
written as (\ref{maximm}) in a neighborhood of $I$ in $\C'$, with
\begin{equation*}
\Phi = \frac{\alpha_z \por \left(\alpha_{zz} - \lambda \xi\right)} {2[\alpha_z, \alpha_{zz}, \xi]} + \frac{j}{2} \ \xi \por \alpha.  \label{curvaholo3}
\end{equation*}
\end{teo}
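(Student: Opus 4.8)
The plan is to reduce Theorem \ref{t4} to the affine Bj\"orling problem already solved in Theorem \ref{t1}, by manufacturing an admissible pair $\{\alpha, U\}$ out of the data $(\alpha, \lambda)$. Along any prospective solution the conormal $U = N\circ\beta$ must obey the relations in (\ref{admisible}), and since the affine metric satisfies $h(\alpha',\alpha') = -\langle U', \alpha'\rangle = \langle\alpha'', U\rangle$, the prescribed function $\lambda$ plays precisely the role of the admissibility function. I would therefore read the three scalar conditions
\[ \langle\alpha', U\rangle = 0, \qquad \langle\xi, U\rangle = 1, \qquad \langle\alpha'', U\rangle = \lambda \]
as a linear system for the unknown $U\in\R^3$. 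Its coefficient matrix has rows $\alpha'$, $\xi$, $\alpha''$ and determinant $\pm[\alpha',\alpha'',\xi]$, nonzero by hypothesis, so $U$ is uniquely and analytically determined by Cramer's rule. Differentiating $\langle\alpha', U\rangle = 0$ gives $\langle\alpha', U'\rangle = -\langle\alpha'', U\rangle = -\lambda$, so the two expressions for $\lambda$ in (\ref{admisible}) agree automatically and $\{\alpha, U\}$ is genuinely admissible.

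Granting this, Theorem \ref{t1} yields a unique indefinite improper affine sphere $\psi$ containing $\alpha(I)$ with affine conormal $U$ and normal $\xi$, and $h(\alpha',\alpha') = \langle\alpha'', U\rangle = \lambda$ along $\alpha$ by construction. Uniqueness in the form stated, where one prescribes $h(\alpha',\alpha')$ rather than the full conormal, is then immediate: any solution has a conormal solving the same nondegenerate linear system, hence equal to $U$, and Theorem \ref{t1} forces the immersion to coincide with $\psi$.

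For the closed-form expression I would solve the system explicitly. One checks directly that
\[ U = \frac{\alpha'\times(\alpha'' - \lambda\xi)}{[\alpha',\alpha'',\xi]} \]
satisfies all three equations: orthogonality to $\alpha'$ is automatic, while pairing against $\alpha''$ and against $\xi$ reproduces $\lambda$ and $1$ respectively by the cyclic symmetry of the triple product. Inserting this into $\Phi = \tfrac12(U + j\,\xi\times\alpha)$ from (\ref{conormal}) and (\ref{phi}), and passing to the split-holomorphic extension so that $\alpha'(s),\alpha''(s)$ become $\alpha_z(z),\alpha_{zz}(z)$ on a neighborhood of $I$ in $\C'$, gives exactly the stated $\Phi$; the immersion (\ref{maximm}) is then quoted verbatim from Theorem \ref{t1}.

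The crux is the hypothesis $[\alpha',\alpha'',\xi]\neq 0$, which is precisely what turns the a priori underdetermined prescription of $h(\alpha',\alpha')$ into a determined one: it makes the $3\times 3$ system for $U$ invertible and thereby restores the uniqueness that, as the remark preceding the theorem records, fails when $[\alpha',\alpha'',\xi]\equiv 0$. Beyond this the only care needed is routine bookkeeping around the extension: checking that the real-analytic formula for $\Phi$ on $I$ extends to a split-holomorphic curve on some $\Omega\subset\C'$ and that the positivity in (\ref{condicion}), which equals $j[\Phi+\overline{\Phi},\Phi_z,\overline{\Phi_z}] = \lambda/2 > 0$ along $I$ exactly as in the proof of Theorem \ref{t1}, persists on $\Omega$ after shrinking; all remaining identities then propagate off $I$ by analyticity.
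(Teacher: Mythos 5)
Your proposal is correct and follows the same route as the paper: the paper's proof likewise solves the admissibility relations (\ref{admisible}) as a nondegenerate linear system to get the unique conormal $U = \alpha'\times(\alpha''-\lambda\xi)/[\alpha',\alpha'',\xi]$ and then invokes Theorem \ref{t1}. Your version simply spells out the Cramer's-rule verification, the compatibility of the two expressions for $\lambda$, and the analytic extension, all of which the paper leaves implicit.
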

\begin{proof}
From (\ref{admisible}), there
is  a unique $U : I \flecha \R^3$,
\begin{equation}
U = \frac{\alpha' \por \left( \alpha'' - \lambda \xi \right)}{[\alpha', \alpha'', \xi]}, \label{conormal3}
\end{equation}
such that $\{\alpha ,U\}$ is an admissible pair of curves for $\xi$ and the result follows from Theorem \ref{t1}.
\end{proof}

In particular, we obtain that the revolution examples can be recovered with one of their circles and the affine metric along it, (see Section 5 for an exhaustive study).

\begin{cor}\label{c3}
Let $\alpha : \R \flecha \R^3$ be the curve $\alpha(s) = (c \cos(s), c \sin(s), 0)$, with $c > 0$. Then, the unique indefinite improper affine sphere $\psi = (\psi_1, \psi_2, \psi_3)$ containing the circle $\alpha(\R)$ with constant affine metric $m > 0$ on $\alpha(\R)$ has coordinates
\begin{eqnarray*}
\psi_1 & = & \cos(s) \left(c \cos(t) - \frac{m \sin(t)}{c} \right),  \\
\psi_2 & = & \sin(s) \left(c \cos(t) - \frac{m \sin(t)}{c} \right),  \\
\psi_3 & = & -\frac{1}{2} \left(c^2 + \frac{m^2}{c^2} \right) t+ m (\cos(t)^2 - 1) + \frac{1}{4} \left(c^2 - \frac{m^2}{c^2} \right) \sin(2 t).
\end{eqnarray*}
\end{cor}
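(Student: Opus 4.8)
The plan is to apply Theorem~\ref{t4} directly to the given circle, computing the explicit split-holomorphic curve $\Phi$ and then invoking the representation formula \eqref{maximm} to extract the three coordinates. I would first verify that $\alpha(s) = (c\cos(s), c\sin(s), 0)$ satisfies the admissibility hypothesis $[\alpha'(s), \alpha''(s), \xi] \neq 0$. Since $\xi = (0,0,1)$, a quick computation gives $\alpha' = (-c\sin s, c\cos s, 0)$ and $\alpha'' = (-c\cos s, -c\sin s, 0)$, so $[\alpha', \alpha'', \xi]$ is the determinant of a matrix whose third column is $\xi$; this reduces to the $2\times 2$ minor $c^2(\sin^2 s + \cos^2 s) = c^2 > 0$, which never vanishes. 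Thus the circle is an admissible base curve, and with the prescribed constant affine metric $\lambda \equiv m > 0$, Theorem~\ref{t4} guarantees a unique indefinite improper affine sphere containing it.

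Next I would compute the split-holomorphic extension and the data $\Phi$. The key is that $\alpha(s)$ extends to $\alpha(z) = (c\cos z, c\sin z, 0)$ where $\cos$ and $\sin$ are the split-complex trigonometric functions; because $j^2 = 1$, these satisfy $\cos(s + jt) = \cos s \cosh t + \cdots$ but with the split-complex (hyperbolic-type) addition formulas, so I would need to be careful to use the correct identities $\cos(s+jt) = \cos(s)\cosh(t) - j\sin(s)\sinh(t)$ and $\sin(s+jt) = \sin(s)\cosh(t) + j\cos(s)\sinh(t)$ — or whatever the split-complex analogue actually yields from $e^{jt} = \cosh t + j \sinh t$. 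I would then form $\alpha_z$, $\alpha_{zz}$, evaluate the cross product $\alpha_z \times (\alpha_{zz} - m\xi)$ and the scalar triple product $[\alpha_z, \alpha_{zz}, \xi]$ in the denominator, add the term $\tfrac{j}{2}\,\xi \times \alpha$, and assemble $\Phi = (\Phi_1, \Phi_2, \Phi_3)$.

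With $\Phi$ in hand, I would then apply Theorem~\ref{t2}, which reduces the extraction of $\psi = (\psi_1, \psi_2, \psi_3)$ to taking real and imaginary parts: $(\psi_1, \psi_2) = \Im(2\Phi_2, -2\Phi_1)$ for the first two coordinates, and integrating the relation $-\psi_{3z} = \psi_{1z} N_1 + \psi_{2z} N_2$ with $N = \Re(2\Phi_1, 2\Phi_2, 1)$ for the third. The emergence of the factor $c\cos t - \tfrac{m\sin t}{c}$ multiplying both $\cos s$ and $\sin s$ in $\psi_1, \psi_2$ strongly suggests that after the split-complex expansion, the $s$-dependence factors out cleanly as $\cos s$ and $\sin s$, confirming the surface of revolution structure, while the $t$-dependence produces the hyperbolic/trigonometric mixture.

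\textbf{The main obstacle} will be the third coordinate $\psi_3$: unlike the first two, it is obtained by integration rather than by a pointwise real/imaginary part, so I expect to perform an explicit split-complex contour integral and simplify the result into the stated combination of a linear term in $t$, a $\cos^2 t$ term, and a $\sin(2t)$ term. The appearance of the coefficients $-\tfrac{1}{2}(c^2 + \tfrac{m^2}{c^2})$ and $\tfrac{1}{4}(c^2 - \tfrac{m^2}{c^2})$ indicates that after substituting $\psi_{1z}, \psi_{2z}, N_1, N_2$ and integrating, products like $\cos^2 s + \sin^2 s = 1$ will collapse the $s$-dependence, leaving a pure function of $t$; carefully tracking the split-complex algebra (especially signs arising from $j^2 = 1$) through this integration is where the real bookkeeping lies, but no conceptual difficulty beyond Theorem~\ref{t2} is expected.
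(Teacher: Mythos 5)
Your proposal follows essentially the same route as the paper: check $[\alpha',\alpha'',\xi]=c^2\neq 0$, obtain the conormal from the formula of Theorem~\ref{t4} (equation (\ref{conormal3}) gives $U(s)=\left(-\tfrac{m}{c}\cos s,\,-\tfrac{m}{c}\sin s,\,1\right)$, equivalently your $\Phi$), and then read off the coordinates from Theorem~\ref{t2} via the split-holomorphic extensions of $\cos$ and $\sin$. The one point to fix is the extension itself: since $j^2=1$, one has $\cos(jt)=\cos t$ and $\sin(jt)=j\sin t$, so $\cos(s+jt)=\cos s\cos t-j\sin s\sin t$ and $\sin(s+jt)=\sin s\cos t+j\cos s\sin t$ (this is exactly the paper's formula), \emph{not} the hyperbolic versions $\cos s\cosh t-j\sin s\sinh t$ you wrote as a first guess; with the hyperbolic identities the computation would produce $\cosh t$, $\sinh t$ dependence and fail to reproduce the stated $c\cos t-\tfrac{m}{c}\sin t$ factor, so the ``hyperbolic/trigonometric mixture'' you anticipate in fact collapses to purely trigonometric functions of $t$.
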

\begin{proof}
From (\ref{conormal3}), we obtain
\begin{eqnarray*}
U(s) = \left( -\frac{m}{c} \cos(s), -\frac{m}{c} \sin(s), 1 \right)
\end{eqnarray*}
and Theorem \ref{t2} gives the above immersion $\psi$, with the split-holomorphic extensions
\begin{eqnarray*}
\cos(s + jt) =  \frac{ \cos(s + t) + \cos(s - t)}{2} + j \ \frac{\cos(s + t) - \cos(s - t)}{2}
\end{eqnarray*}
and
\begin{eqnarray*}
\sin(s + jt) =  \frac{ \sin(s + t) + \sin(s - t)}{2} + j \ \frac{\sin(s + t) - \sin(s - t)}{2}.
\end{eqnarray*}
\end{proof}

Second, we characterize the curves in $\R^3$ which can be pre-geodesics or geodesics of some indefinite improper affine sphere.

\begin{teo}\label{t5}
Let $\psi:\Sigma \flecha \R^3$ be an indefinite improper affine sphere with affine normal $\xi$ and affine conormal $N$. If
$\beta : I \flecha \Sigma$ is a regular analytic curve, $\alpha = \psi \circ \beta$ and $U = N \circ \beta$,
then $\alpha$ is a pre-geodesic for the affine metric if and only if
\begin{equation} \label{ConPregeo}
[\alpha', \alpha'', \xi] = [U, U', U''] \qquad \mbox{on}\quad I.
\end{equation}
\end{teo}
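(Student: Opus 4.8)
The plan is to work in a local conformal parameter $z = s + jt$ for the affine metric $h$, as in the Preliminaries, and to reduce the statement to an equality of two scalar triple products that I will compute directly from the structure equations (\ref{ro})--(\ref{laplaz}). Write the curve $\beta$ in this parameter as $w(s) = \sigma(s) + j\tau(s)$, so that $\alpha = \psi\circ\beta$ and $U = N\circ\beta$ satisfy, by the chain rule, $\alpha' = \psi_z\,w' + \psi_{\con z}\,\con{w'}$ and $U' = N_z\,w' + N_{\con z}\,\con{w'}$, with analogous second-order expressions. Since $h$ is conformal, only the Christoffel symbols $(\log\rho)_z$ and $(\log\rho)_{\con z}$ survive, so $\beta$ is a pre-geodesic of $h$ exactly when $\nabla_{\beta'}\beta'$ is proportional to $\beta'$, i.e. when the determinant condition $R\,\con{w'} - \con R\,w' = 0$ holds, where $R = w'' + (\log\rho)_z (w')^2$. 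The goal is then to show that this determinant condition is equivalent to $[\alpha',\alpha'',\xi] = [U,U',U'']$.

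The key preliminary step is to record the structure equations in the moving frame $\{\psi_z,\psi_{\con z},\xi\}$. Differentiating $\langle N,\psi_z\rangle = 0$ and using $N_z = j\,\xi\times\psi_z$ from (\ref{isotermos}) shows that $\psi_{zz}$ has no $\xi$-component, hence $\psi_{zz} = (\log\rho)_z\,\psi_z + b\,\psi_{\con z}$ for a cubic-form coefficient $b$; the coefficient $(\log\rho)_z$ is pinned down by matching $\xi$-components in $\partial_{\con z}\psi_{zz} = \partial_z(\rho\,\xi)$, using $\psi_{z\con z} = \rho\,\xi$ from (\ref{laplaz}). Applying $j\,\xi\times(\cdot)$ and using $N_{\con z} = -\,j\,\xi\times\psi_{\con z}$ then yields the dual equation $N_{zz} = (\log\rho)_z\,N_z - b\,N_{\con z}$, together with $N_{z\con z} = 0$ from (\ref{laplaz}). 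The essential point, which I expect to be the main obstacle to get right, is the opposite sign of $b$ in these two equations: this is what makes the cubic form contribute identically to both triple products and therefore cancel.

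With these equations in hand, I would substitute into $\alpha''$ and $U''$. The term $2\rho\,\xi\,w'\con{w'}$ in $\alpha''$ drops out of $[\,\cdot,\cdot,\xi\,]$, while $U''$ has no $N$-component because $N_{z\con z}=0$; using $[\psi_z,\psi_{\con z},\xi] = -\,j\rho$ and $[N,N_z,N_{\con z}] = j\rho$ from (\ref{ro})--(\ref{conz}), both triple products reduce to a single split-imaginary part. Concretely I expect
\begin{equation*}
[\alpha',\alpha'',\xi] = -2\rho\,\Im\!\big(b\,(w')^3\big) - 2\rho\,\Im\!\big(w'\con{w''} + (\log\rho)_{\con z}\,w'(\con{w'})^2\big),
\end{equation*}
\begin{equation*}
[U,U',U''] = -2\rho\,\Im\!\big(b\,(w')^3\big) + 2\rho\,\Im\!\big(w'\con{w''} + (\log\rho)_{\con z}\,w'(\con{w'})^2\big),
\end{equation*}
so that the cubic-form terms coincide and subtract out, giving $[\alpha',\alpha'',\xi] - [U,U',U''] = -4\rho\,\Im\big(w'\con{w''} + (\log\rho)_{\con z}\,w'(\con{w'})^2\big)$. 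A short identification shows this split-imaginary part equals $\tfrac{1}{2j}(\con R\,w' - R\,\con{w'})$, which vanishes precisely when $\nabla_{\beta'}\beta' \parallel \beta'$. Since $\rho > 0$, the equality $[\alpha',\alpha'',\xi] = [U,U',U'']$ is therefore equivalent to $\alpha$ being a pre-geodesic of the affine metric, proving (\ref{ConPregeo}). The only delicate points are the bookkeeping of the factors of $j$ and $\rho$ and the verification that the geodesic-curvature term matches the conformal pre-geodesic equation; the conceptual heart is the sign asymmetry between the structure equations for $\psi$ and for $N$.
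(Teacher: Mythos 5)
Your proposal is correct, but it takes a genuinely different route from the paper's. The paper exploits the parametrization built in the proof of Theorem \ref{t1}: it takes the conformal parameter adapted to the curve, so that $\alpha(s)=\psi(s,0)$, whence the pre-geodesic condition collapses to $\partial_t\rho=0$ along $t=0$, i.e.\ to the vanishing of the imaginary part of $\partial_z h(\partial_z,\partial_{\con z})=j[N,N_{zz},N_{\con z}]$; this is then expanded directly from the explicit Bj\"{o}rling data $2\Phi_z=U'+j\,\xi\times\alpha'$, $2\Phi_{zz}=U''+j\,\xi\times\alpha''$, and the imaginary part is read off as $\tfrac14\left([\alpha',\alpha'',\xi]-[U,U',U'']\right)$. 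You instead keep an arbitrary parametrization $w(s)$, derive the structure equations $\psi_{zz}=(\log\rho)_z\psi_z+b\,\psi_{\con z}$ and $N_{zz}=(\log\rho)_zN_z-b\,N_{\con z}$, and observe that the cubic-form coefficient $b$ enters both triple products identically and cancels from their difference, leaving $-4\rho\,\Im\!\left(w'\con{R}\right)$ with $R=w''+(\log\rho)_z(w')^2$. I have checked your two displayed formulas, the sign conventions $[\psi_z,\psi_{\con z},\xi]=-j\rho$ and $[N,N_z,N_{\con z}]=j\rho$, and the final identification with the conformal pre-geodesic equation; all are correct, and no zero-divisor issue arises since nothing is divided by $w'$. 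The paper's computation is shorter because the adapted parameter makes $w'=1$, $w''=0$ and avoids the Gauss equations altogether; yours is longer but parametrization-free and isolates the structural reason for the identity, namely the sign reversal of the cubic form between the equations for $\psi$ and for $N$ coming from $N_z=j\,\xi\times\psi_z$. Either argument is complete.
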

\begin{proof}
As we have seen in Theorem \ref{t1}, there exists a conformal parameter
$z = s + j t$ for the affine metric $h$, such that $\psi(s,0)=\alpha(s)$.

It is well-known that $\alpha$ is a pre-geodesic if and only if
$\nabla_{\alpha'(s)}\alpha'(s)$ is proportional to $\alpha'(s)$,
where $\nabla$ is the Levi-Civita connection of $h$, or
equivalently,
\[
0 = h\left( \nabla_{\frac{\partial}{\partial s}} \frac{\partial}{\partial s}, \frac{\partial}{\partial t}  \right)
= - \frac{1}{2} \frac{\partial}{\partial t} h \left( \frac{\partial}{\partial s}, \frac{\partial}{\partial s} \right)
= - \frac{\partial}{\partial t}
h \left( \frac{\partial}{\partial z}, \frac{\partial}{\partial \overline{z}} \right)
\]
along $\alpha(s)$. That is, the imaginary part of
\[
\frac{\partial}{\partial z} h \left( \frac{\partial}{\partial z},
\frac{\partial}{\partial \overline{z}} \right)
\]
vanishes identically for all $z=s\in I$ and we conclude, from (\ref{ro}), (\ref{conormal}), (\ref{admisible}) and
(\ref{phi}), since
\begin{eqnarray*}
\frac{\partial}{\partial z} h \left( \frac{\partial}{\partial z},
\frac{\partial}{\partial \overline{z}} \right)&=&
j \left[N,N_{zz},N_{\overline{z}}\right] =\frac{j}{4} \left[ U,
U'' + j \xi \times \alpha'', U' - j \xi \times \alpha' \right]\\
& = & \frac{1}{4}\left([U, \xi \times \alpha'',U'] - [U,U'', \xi \times \alpha'] \right) \\
& + & \frac{j}{4}\left([\alpha', \alpha'', \xi] - [U, U', U'']\right)
\end{eqnarray*}
along $\alpha$.
\end{proof}

As a pre-geodesic $\alpha$ is a geodesic for $h$ if and only if $h(\alpha',\alpha')$ is constant on $I$, Theorems \ref{t1},  \ref{t4} and \ref{t5} give

\begin{cor}\label{c4}
A regular analytic curve $\alpha : I \flecha \R^3$, with $[\alpha', \alpha'', \xi] \neq 0$ on $I$, is the geodesic of some indefinite improper affine sphere if and
only if
\begin{equation*}
U = \frac{\alpha' \por \left( \alpha'' - m \xi \right)}{[\alpha', \alpha'', \xi]}
\end{equation*}
verifies (\ref{ConPregeo}), for some constant $m > 0$.
\end{cor}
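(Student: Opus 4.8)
The plan is to read this off from the note preceding the statement: a pre-geodesic $\alpha$ is a geodesic precisely when $h(\alpha',\alpha')$ is constant, so I would combine the characterization of pre-geodesics in Theorem \ref{t5} with the existence and uniqueness result of Theorem \ref{t4}. The one fact I must pin down first is that the positive function $\lambda$ appearing in Theorem \ref{t4} and in formula (\ref{conormal3}) is nothing but the affine speed $h(\alpha',\alpha')$ along $\alpha$. Indeed, with a conformal parameter $z=s+jt$ and $\psi(s,0)=\alpha(s)$ one has $h=2\rho\,dz\,d\overline{z}$, whence $h(\alpha',\alpha')=h(\psi_s,\psi_s)=2\rho=\lambda$ by (\ref{ro}) and the admissibility relations (\ref{admisible}); consequently the conormal $U$ produced by Theorem \ref{t4} is exactly (\ref{conormal3}) with $\lambda=h(\alpha',\alpha')$.

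For the direct implication, assume $\alpha$ is a geodesic of some indefinite improper affine sphere $\psi$, and set $U=N\circ\beta$ as in Theorem \ref{t5}. Being a geodesic, $\alpha$ is in particular a pre-geodesic, so Theorem \ref{t5} yields that $U$ satisfies (\ref{ConPregeo}). Moreover a geodesic has constant affine speed, so $m:=h(\alpha',\alpha')$ is a positive constant; substituting $\lambda=m$ in the identification above (equivalently, in (\ref{conormal3})) shows that $U$ is precisely $\alpha'\por(\alpha''-m\xi)/[\alpha',\alpha'',\xi]$, as claimed.

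For the converse, suppose some constant $m>0$ makes $U=\alpha'\por(\alpha''-m\xi)/[\alpha',\alpha'',\xi]$ satisfy (\ref{ConPregeo}). I would apply Theorem \ref{t4} with the constant function $\lambda\equiv m$: this produces an indefinite improper affine sphere $\psi$ containing $\alpha(I)$ with $h(\alpha',\alpha')=m$ on $I$, and by (\ref{conormal3}) its affine conormal along $\alpha$ is exactly this $U$. Since $U$ verifies (\ref{ConPregeo}), Theorem \ref{t5} makes $\alpha$ a pre-geodesic of $\psi$; and since $h(\alpha',\alpha')\equiv m$ is constant, the note upgrades this pre-geodesic to a genuine geodesic, which closes the argument.

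There is no serious analytic obstacle here: the corollary is essentially a bookkeeping assembly of Theorems \ref{t1}, \ref{t4} and \ref{t5}. The only point demanding care is the matching of data, namely verifying that the $U$ delivered by Theorem \ref{t4} for constant speed $m$ coincides with the expression in the statement, and that $m$ is forced to be positive because it equals the affine metric $h(\alpha',\alpha')>0$. Once that identification is secured, both implications follow at once.
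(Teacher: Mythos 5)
Your proposal is correct and follows the paper's own route exactly: the paper derives Corollary \ref{c4} in one line from the observation that a pre-geodesic is a geodesic iff $h(\alpha',\alpha')$ is constant, combined with Theorems \ref{t1}, \ref{t4} and \ref{t5}, which is precisely your assembly. The identification $\lambda=h(\alpha',\alpha')$ that you verify is already built into the statement of Theorem \ref{t4}, so your argument just makes explicit the bookkeeping the paper leaves to the reader.
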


\begin{remark}
From \cite{ACG}, we know that the curve $\alpha(s) = (c \cos(s), c \sin(s), 0)$ can not be a geodesic of a definite improper affine sphere. However, from Corollaries \ref{c3} and \ref{c4}, $\alpha$ is a geodesic of the revolution indefinite improper affine sphere with $m = c^2 > 0$.
\end{remark}

\begin{remark}
Instead, for an admissible pair $\{\alpha ,U\}$, with $m \alpha'' = m^2 \xi - \esiz \alpha'', U' \esde \alpha'$, the curve $\alpha$ is a geodesic of the family of indefinite improper affine spheres generated by $\widetilde{U} = U + \mu \xi \times \alpha'$, for any analytic function $\mu$, such that
\begin{equation*}
[\xi, \widetilde{U}', \widetilde{U}''] = [\widetilde{U}, \widetilde{U}', \widetilde{U}''] =0.
\end{equation*}
For instance, $\alpha(s) = (s, 0, m s^2/2)$ has $\alpha''(s) = m \xi$, $\widetilde{U}(s) = (- m s, \mu(s), 1)$ and the above condition is $\mu'' = 0$.
\end{remark}

\section{Helicoidal indefinite improper affine spheres}

In Corollary \ref{c3}, we see that any symmetry in the circle is also a symmetry in the (revolution) indefinite improper affine sphere which generates.

Motivated by this fact, we consider the equiaffine transformation $T:\R^3 \flecha \R^3$ given by
\begin{equation*}
T(v) = A v + v_0, \; \; v \in \R^3,
\end{equation*}
where $A\in\S \L(3, \R) $  and $v_0 \in \R^3$. We will say that $T$ is a symmetry of an admissible pair $\{\alpha ,U\}$ if $A \xi = \xi$ and there exists an
analytic diffeomorphism $\Gamma : I \flecha I$ such that $\alpha \circ \Gamma = T
\circ \alpha$ and $U \circ \Gamma = (A^t)^{-1} U$.

Thus, the following result is an extension of  Theorem 4.2 in
\cite{ACM} and it can be proved analogously to the corresponding
one in \cite{GMi}.

\begin{teo}\label{gsp}(Generalized symmetry principle).
Any symmetry of an admissible pair induces a global symmetry of
the indefinite improper affine sphere generated by it.
\end{teo}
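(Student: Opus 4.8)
The plan is to realize the spatial symmetry $T$ on the surface by transporting the boundary symmetry $\Gamma$ through the conformal representation of Theorem \ref{t1}. First I would extend the analytic diffeomorphism $\Gamma:I\flecha I$ to a split-holomorphic map $\hat\Gamma$ on a neighborhood $\Omega\subset\C'$ of $I$, which is possible by analyticity and furnishes a local conformal reparametrization of the domain $\Sigma$. The goal is then to establish the two identities $\psi\circ\hat\Gamma = T\circ\psi$ and $N\circ\hat\Gamma = (A^t)^{-1}N$ on $\Omega$, and finally to propagate them to all of $\Sigma$.

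The computation rests on two elementary facts about $A\in SL(3,\R)$: since $\det A = 1$, one has $(Au)\por(Av) = (A^t)^{-1}(u\por v)$ and, replacing $A$ by $(A^t)^{-1}$, also $((A^t)^{-1}u)\por((A^t)^{-1}v) = A(u\por v)$; combined with $A\xi=\xi$ these yield $\xi\por(Aw) = (A^t)^{-1}(\xi\por w)$. Using the definition of $\Phi$ in (\ref{phi}) together with the symmetry relations $\alpha\circ\Gamma = A\alpha + v_0$ and $U\circ\Gamma = (A^t)^{-1}U$, I would first check along $I$, hence on $\Omega$ by analyticity, the functional identity
\begin{equation*}
\Phi\circ\hat\Gamma = (A^t)^{-1}\Phi + \tfrac{j}{2}\,\xi\por v_0,
\end{equation*}
which is the bridge between the boundary symmetry and the split-holomorphic data.

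Next I would feed this into the representation (\ref{inmer}). On one hand, because $\hat\Gamma$ is split-holomorphic, the change of variable $w=\hat\Gamma(z)$ in the integral shows that $\psi\circ\hat\Gamma$ is precisely the immersion generated by $\Phi\circ\hat\Gamma$. On the other hand, writing $\Phi\circ\hat\Gamma = (A^t)^{-1}\Phi + c$ with $c$ constant, the constant cancels in $\Phi\circ\hat\Gamma + \con{\Phi\circ\hat\Gamma} = (A^t)^{-1}(\Phi+\con{\Phi})$, and by the second cross-product identity the integrand becomes $A\bigl(j(\Phi+\con{\Phi})\por\Phi_z\bigr)$. Hence the immersion generated by $\Phi\circ\hat\Gamma$ equals $A\psi$ plus a constant; matching base points via $\alpha(\Gamma(s_0)) = A\alpha(s_0)+v_0$ pins that constant down to $v_0$, so $\psi\circ\hat\Gamma = T\circ\psi$ on $\Omega$. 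The conormal relation $N\circ\hat\Gamma = (A^t)^{-1}N$ then follows immediately from $N = \Phi + \con{\Phi}$ and the functional identity above.

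The main obstacle is the word \emph{global}: the argument so far yields the symmetry only on a neighborhood of $I$. To conclude I would invoke that $\psi$, $N$ and $\hat\Gamma$ are real-analytic, being built from split-holomorphic data, so the identities $\psi\circ\hat\Gamma = T\circ\psi$ and $N\circ\hat\Gamma = (A^t)^{-1}N$, valid on the open set $\Omega$, extend by analytic continuation to the whole connected domain $\Sigma$; since $\psi$ is an immersion and $T$ preserves both the affine structure and the normal $\xi$, the continued $\hat\Gamma$ is a global conformal diffeomorphism of $\Sigma$. This exhibits $T$ as a global symmetry of the indefinite improper affine sphere, as claimed, and is the step that parallels the corresponding argument in \cite{GMi}.
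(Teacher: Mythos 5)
Your argument is correct and is essentially the proof the paper intends: the paper itself supplies no details, deferring to the analogous results in \cite{ACM} and \cite{GMi}, whose strategy is exactly what you carry out — extend $\Gamma$ split-holomorphically, use the cofactor identity for $A$ with $\det A=1$ and $A\xi=\xi$ to get $\Phi\circ\hat\Gamma=(A^t)^{-1}\Phi+\tfrac{j}{2}\xi\por v_0$, push this through the representation formula (\ref{inmer}), and continue analytically. The only point worth tightening is the globalization step: rather than a bare appeal to analytic continuation of $\hat\Gamma$, the cleaner route (and the one used in the cited sources) is either an open-and-closed argument on the connected surface $\Sigma$ or the observation that $T\circ\psi$ is an indefinite improper affine sphere generated by the reparametrized admissible pair, so the uniqueness in Theorem \ref{t1} forces it to coincide with $\psi$ up to the conformal reparametrization $\hat\Gamma$.
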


As application of Theorems \ref{t1} and \ref{gsp}, we shall obtain the indefinite improper affine spheres which are invariant under a one-parametric group of equiaffine transformations $\{T^s \; : \; s \in \R\}$.

On the one hand, we have the well-known ruled improper affine spheres. On the other, from \cite{ACG} and \cite{AMM}, we must study the orbits $\alpha_p$ of a fixed point $p \in \R^3$ under the following three groups:
\begin{eqnarray}
T_{1a}^s(v)\ &  = &
\left( \begin{array}{ccc}
1 & 0 & 0  \\
s & 1 & 0  \\
\frac{a s^2}{2} & as & 1
\end{array} \right) v + \left( \begin{array}{c} s \\ \frac{s^2}{2} \\ \frac{a s^3}{6} \end{array} \right) \label{g1} \\
T_{2a}^s(v)\  & = &
\left( \begin{array}{ccc}
\cos(s) & \sin(s) & 0 \\
-\sin(s)& \cos(s) & 0 \\
0 & 0 & 1
\end{array} \right) v + \left( \begin{array}{c} 0 \\ 0 \\ a s \end{array} \right) \label{g2} \\
T_{3a}^s(v)\  & = &
\left( \begin{array}{ccc}
e^s & 0 & 0 \\
0 & e^{-s} & 0 \\
0 & 0 & 1
\end{array} \right) v + \left( \begin{array}{c} 0 \\ 0 \\ a s \end{array} \right) \label{g3}
\end{eqnarray}
where $a \in \R$. We will use that the affine metric is constant along $\alpha_p$.

\begin{figure}
\begin{center}
\includegraphics[width=0.30\textwidth]{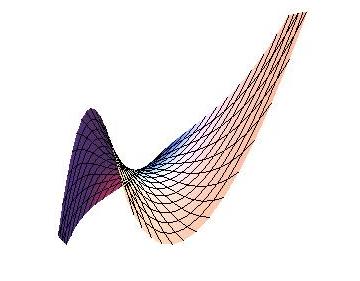} \qquad
\includegraphics[width=0.26\textwidth]{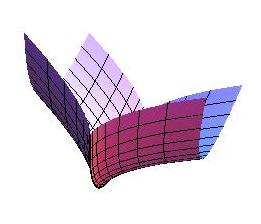} \\
\caption{(\ref{g1})-invariant indefinite improper affine spheres.}
\end{center}
\end{figure}

\subsection{Group (\ref{g1})}

In this case, every orbit go through a point $p = (0, b, c)$ and can be written as
\begin{equation*}
\alpha_p(s) = T_{1a}^s(p) = \left(s, b + \frac{s^2}{2},  a b s + c + a \frac{s^3}{6} \right).
\end{equation*}
Thus, $[\alpha_p', \alpha_p'', \xi] = 1$ on $\R$ and, for a constant $m > 0$, Theorem \ref{t4} gives
\begin{equation*}
U_p(s) = \left( -a b - m s + a \frac{s^2}{2}, m - a s, 1\right)
\end{equation*}
and the associated immersion $\psi$ has coordinates
\begin{eqnarray*}
\psi_1 & = & s - a t,  \\
\psi_2 & = & b + \frac{s^2}{2} + m t - a s t + \frac{t^2}{2},  \\
\psi_3 & = & c + a b s + a \frac{s^3}{6} - m^2 t - a^2 b t + a m s t - \frac{s^2}{2} a^2 t -
  m t^2 + \frac{t^2}{2} a s - \frac{t^3}{3} + a^2 \frac{t^3}{6}.
\end{eqnarray*}
Now, as the affine metric is
\begin{equation*}
h = [\psi_s, \psi_t, \xi](ds^2 - dt^2) = \left( m + (1 - a^2)t \right) (ds^2 - dt^2),
\end{equation*}
we obtain two different cases, (see Figure 1):
\begin{enumerate}
\item If $a^2 = 1$, then $\psi(\R^2)$ is a Cayley surface, with complete flat affine metric.
\item If $a^2 \neq 1$, then $\psi$ is an immersion when $m \neq (a^2 - 1)t$ and $\psi(\R^2)$ contains the singular curve $\gamma(\R)$ with
\begin{equation*}
\gamma(s) = \psi \left( s, \frac{m}{a^2 - 1} \right), \qquad s \in \R.
\end{equation*}
\end{enumerate}

\begin{figure}
\begin{center}
\includegraphics[width=0.125\textwidth]{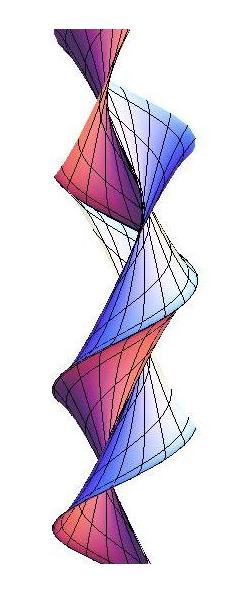} \qquad
\includegraphics[width=0.18\textwidth]{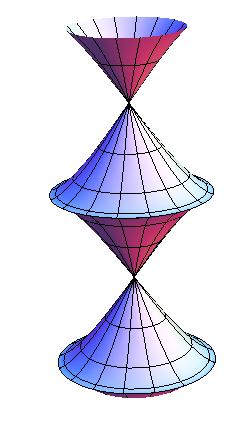} \\
\caption{(\ref{g2})-invariant indefinite improper affine spheres.}
\end{center}
\end{figure}

\subsection{Group (\ref{g2})}

Similarly, we can consider the orbit of a point $p = (c, 0, 0)$, with $c > 0$, that is
\begin{equation*}
\alpha_p(s) = T_{2a}^s(p) = (c \cos(s), c \sin(s), a s)
\end{equation*}
and $[\alpha_p', \alpha_p'', \xi] = c^2$ on $\R$. Then
\begin{equation*}
U_p(s) = \left( \frac{ - m \cos(s) + a \sin(s)}{c}, \frac{ - a \cos(s) - m \sin(s)}{c}, 1 \right)
\end{equation*}
and the associated immersion $\psi$ has coordinates
\begin{eqnarray*}
\psi_1 & = & \cos(s) \left(c \cos(t) - \frac{m \sin(t)}{c} \right) + \frac{a \sin(s) \sin(t)}{c},  \\
\psi_2 & = & \sin(s) \left(c \cos(t) - \frac{m \sin(t)}{c} \right) - \frac{a \cos(s) \sin(t)}{c},  \\
\psi_3 & = & a s - \frac{1}{2} \left(c^2 + \frac{m^2}{c^2} +  \frac{a^2}{c^2} \right) t+ m (\cos(t)^2 - 1) + \frac{1}{4} \left(c^2 - \frac{m^2}{c^2} - \frac{a^2}{c^2} \right) \sin(2 t).
\end{eqnarray*}

Here, the affine metric is
\begin{equation*}
h = \left( m \cos(2 t) - \frac{\left(a^2-c^4+m^2\right) \cos(t) \sin(t)}{c^2} \right) (ds^2 - dt^2)
\end{equation*}
and $\psi(\R^2)$ has singular curves for all $a \in \R$, (see Figure 2).

Moreover, in the revolution case, $a = 0$, there are isolated singularities when $c^2 \cos(t) = m \sin(t)$.

\begin{remark}
In contrast to J\"{o}rgens theorem, the revolution surfaces do not provide entire solutions with at most an isolated singularity to the Hessian $-1$ equation.
\end{remark}

\subsection{Group (\ref{g3})}

This case can be reduced to the orbit of a point $p = (c, 1, 0)$, with $c > 0$, that is
\begin{equation*}
\alpha_p(s) = T_{3a}^s(p) = (c e^s, e^{-s}, a s)
\end{equation*}
and $[\alpha_p', \alpha_p'', \xi] = 2 c$ on $\R$. Then
\begin{equation*}
U_p(s) = \left( \frac{e^{-s} (-a+m)}{2 c},\frac{1}{2} e^s (a+m),1 \right)
\end{equation*}
and the associated immersion $\psi$ has coordinates
\begin{eqnarray*}
\psi_1 & = & \frac{1}{4} e^{s-t} \left( e^{2 t} \left( a + 2c + m \right) - a + 2c - m \right),  \\
\psi_2 & = & \frac{1}{4c} e^{-s-t} \left(e^{2 t} \left( - a + 2c + m \right) + a + 2c - m \right),  \\
\psi_3 & = & a s+\frac{e^{-2 t} \left(-a^2+(-2 c+m)^2\right)+e^{2 t} \left(a^2-(2 c+m)^2\right)+4 \left(a^2+4 c^2-m^2 \right) t}{16 c}.
\end{eqnarray*}

Now, the affine metric is
\begin{equation*}
h = \left( m \cosh(2 t) + \frac{\left(4 c^2 + m^2 - a^2\right) \sinh(2 t)}{4 c} \right) (ds^2 - dt^2)
\end{equation*}
and we get three different cases (see Figure 3):
\begin{enumerate}
\item If $4 c m \geq | 4 c^2 + m^2 - a^2 |$, then $\psi$ is an immersion on $\R^2$.
\item If $a = 0$ and $m > 2c$, then $\psi$ has an isolated singularity when $e^{2 t}(2c + m) = m - 2c$.
\item Otherwise, $\psi(\R^2)$ has a singular curve.
\end{enumerate}

\begin{figure}
\begin{center}
\includegraphics[width=0.3\textwidth]{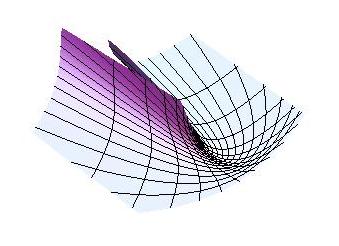} \qquad
\includegraphics[width=0.26\textwidth]{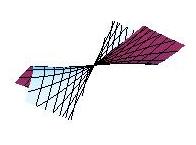} \qquad
\includegraphics[width=0.26\textwidth]{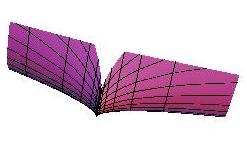} \\
\caption{(\ref{g3})-invariant indefinite improper affine spheres.}
\end{center}
\end{figure}

As consequence of the above study, we have proved the following result.

\begin{teo}
The only helicoidal indefinite improper affine spheres, with complete non flat affine metric, are the (\ref{g3})-invariant examples with $4 c m > | 4 c^2 + m^2 - a^2 |$.
\end{teo}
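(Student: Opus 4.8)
The plan is to run through the classification obtained in this section and, family by family, decide when the affine metric is simultaneously complete and non-flat. By Theorems \ref{t1} and \ref{gsp} together with \cite{ACG, AMM}, every helicoidal indefinite improper affine sphere is either ruled or invariant under one of the groups (\ref{g1}), (\ref{g2}), (\ref{g3}), and in each case the affine metric has already been computed in the form $h = \phi(t)\,(ds^2 - dt^2)$ with $\phi$ a function of $t$ alone which, by (\ref{ro}), is positive wherever $\psi$ is an immersion. First I would discard the surfaces whose metric is flat: the ruled examples and the Cayley surface arising for $a^2 = 1$ in (\ref{g1}).

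Next I would eliminate the remaining (\ref{g1}) and all (\ref{g2}) examples by exhibiting a singular curve together with geodesic incompleteness along it. For (\ref{g1}) with $a^2 \neq 1$ the factor $\phi(t) = m + (1 - a^2)t$ is a non-constant affine function and vanishes at $t_0 = m/(a^2 - 1)$; for (\ref{g2}) the factor $\phi(t) = m\cos(2t) - \frac{a^2 - c^4 + m^2}{2c^2}\sin(2t)$ is a nonzero sinusoid of amplitude at least $m>0$ and also vanishes. In either situation $\psi$ ceases to be an immersion on a singular curve, and the metric is incomplete there: integrating the null geodesic equation for $h = \phi(t)(ds^2 - dt^2)$ gives affine parameter $\tau = C\int \phi\,dt$ along each null line, and $\int\phi\,dt$ converges as $t$ approaches a simple zero of $\phi$, so the null geodesic reaches the singular set in finite affine length.

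The decisive family is (\ref{g3}), where
\[
\phi(t) = m\cosh(2t) + B\sinh(2t) = \tfrac{m+B}{2}e^{2t} + \tfrac{m-B}{2}e^{-2t}, \qquad B = \frac{4c^2 + m^2 - a^2}{4c}.
\]
Here $\psi$ is an immersion on all of $\R^2$ exactly when $\phi > 0$ everywhere, that is $|B| \le m$, i.e. $4cm \ge |4c^2 + m^2 - a^2|$; the metric is flat precisely when $\ln\phi$ is affine in $t$, i.e. when one exponential drops out ($|B| = m$), so non-flatness forces the strict inequality $4cm > |4c^2 + m^2 - a^2|$. To obtain completeness under this strict inequality I would use the Killing field $\partial_s$: with $E = \phi\dot s$ conserved and $h(\dot\gamma,\dot\gamma) = \varepsilon$, $\varepsilon \in \{0, \pm1\}$, the geodesics reduce to the quadrature $\dot t^{\,2} = (E^2 - \varepsilon\phi)/\phi^2$, $d\tau = \phi\,dt/\sqrt{E^2 - \varepsilon\phi}$.

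Under the strict inequality both coefficients $m \pm B$ are positive, so $\phi \ge \sqrt{m^2 - B^2} > 0$ and $\phi \sim e^{2|t|}$ as $t \to \pm\infty$. I would then treat the three causal types separately: null geodesics ($\varepsilon = 0$) have $\tau = C\int\phi\,dt$, now divergent at both ends since both exponentials survive; timelike geodesics ($\varepsilon = -1$) have $\dot t$ nowhere zero, with $t$ monotone to $\pm\infty$ and $d\tau \sim \sqrt\phi\,dt$ divergent; spacelike geodesics ($\varepsilon = +1$) oscillate between turning points $\phi = E^2$, each reached in finite affine time but across which the geodesic extends smoothly. In every case the affine parameter sweeps out all of $\R$, so $h$ is complete, whereas in the boundary case $|B| = m$ only one exponential remains and $\int\phi\,dt$ converges at one end, giving incompleteness. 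This singles out exactly the (\ref{g3})-examples with $4cm > |4c^2 + m^2 - a^2|$. The main obstacle is precisely this completeness analysis of an indefinite metric, where completeness cannot be read off from a single causal type; the reduction to quadratures and the separate treatment of null, timelike and spacelike geodesics is unavoidable, with the null geodesics furnishing the sharp divergence condition $\int_{\pm\infty}\phi = \infty$ that pins down the strict inequality.
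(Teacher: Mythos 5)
Your proposal is correct and follows essentially the same route as the paper: the theorem is stated there simply ``as consequence of the above study'', i.e.\ the case-by-case computation of the conformal factor $\phi(t)$ for the ruled examples and the three groups (\ref{g1})--(\ref{g3}), with the flat and singular cases discarded. You additionally supply the geodesic quadrature argument establishing completeness in the strict-inequality (\ref{g3}) case and incompleteness at simple zeros of $\phi$, details the paper leaves implicit.
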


\section{Indefinite improper affine maps}
Helicoidal examples show the existence of an important amount of
indefinite improper affine spheres, glued by analytic curves where the affine
metric is degenerated, but the affine conormal $N = \Phi + \overline{\Phi}$ is well defined on the Lorentz surface $\Sigma$.

Hence, we say that $\psi : \Sigma \longrightarrow \R^3$ is an indefinite improper affine map, with constant affine normal $\xi$, if it admits the representation (\ref{inmer}) for a split-holomorphic curve $\Phi$ which satisfies that
$[\Phi+\overline{\Phi},\Phi_z,\overline{\Phi_z}]$ does not
vanish identically and $2 \esiz \Phi, \xi \esde = 1$.

A forthcoming study of this topic may be motivated by the following extension of Theorem \ref{t4}.

\begin{teo}
Let $\alpha : I \flecha \R^3$ be a regular analytic curve satisfying
\begin{equation*}
[\alpha'(s), \alpha''(s), \xi] \neq 0
\end{equation*}
for all $s \in I$. Then, there exists a
unique indefinite improper affine map $\psi$ containing $\alpha(I)$ in its set of singularities. Moreover,  the map $\psi$ can be written as (\ref{maximm}) in a neighborhood of $I$ in $\C'$, with
\begin{equation*}
\Phi = \frac{\alpha_z \por \alpha_{zz}} {2[\alpha_z, \alpha_{zz}, \xi]} + \frac{j}{2} \ \xi \por \alpha.
\end{equation*}
\end{teo}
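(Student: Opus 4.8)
The plan is to repeat the proof of Theorem \ref{t4} with the value $\lambda\equiv 0$, which is precisely the value that degenerates the affine metric along $\alpha$. First I would set
\[
U=\frac{\alpha'\por\alpha''}{[\alpha',\alpha'',\xi]},
\]
a well-defined analytic curve because $[\alpha',\alpha'',\xi]\neq 0$ on $I$. Since $\alpha'\por\alpha''$ is orthogonal to both $\alpha'$ and $\alpha''$, the pair $\{\alpha,U\}$ satisfies the first and third relations in (\ref{admisible}) with $\lambda=0$, while $\esiz\xi,U\esde=[\alpha',\alpha'',\xi]/[\alpha',\alpha'',\xi]=1$ gives the second. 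Extending $\Phi=\frac12(U+j\,\xi\por\alpha)$ split-holomorphically to a neighbourhood $\Omega$ of $I$ (the denominator $[\alpha_z,\alpha_{zz},\xi]$ stays invertible near $I$, being equal to $[\alpha',\alpha'',\xi]\neq 0$ on $I$) reproduces the stated formula for $\Phi$, exactly as in Theorem \ref{t4}.

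Next I would check that this $\Phi$ defines an indefinite improper affine map. The identity $2\esiz\Phi,\xi\esde=1$ holds on $\Omega$ by analyticity, just as in Theorem \ref{t1}. Writing $\ro=j[\Phi+\con{\Phi},\Phi_z,\con{\Phi_z}]$, the computation in the proof of Theorem \ref{t1} gives $\ro=\lambda/2=0$ along $I$, so $\alpha(I)$ lies in the degeneracy locus $\{\ro=0\}$; together with $\psi(s,0)=\alpha(s)$, which follows from (\ref{maximm}) exactly as before, this shows that the map $\psi$ of (\ref{maximm})--(\ref{inmer}) contains $\alpha(I)$ in its singular set. The one genuinely new point, and the condition distinguishing an affine map from an affine sphere, is to verify that $\ro$ does \emph{not} vanish identically.

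The main obstacle is therefore the non-degeneracy $\ro\not\equiv 0$, and this is exactly where $[\alpha',\alpha'',\xi]\neq 0$ is essential. I would argue as follows. The conormal $N=\Phi+\con{\Phi}$ solves $N_{z\con{z}}=0$, so in null coordinates it splits as $N=A(s+t)+B(s-t)$, whence $\ro=[N,N_{s+t},N_{s-t}]=[A+B,A',B']$. The constraint $\esiz N,\xi\esde=1$ forces $A'$ and $B'$ to take values in $\xi^{\perp}$, so $A'\por B'$ is a multiple of $\xi$ and $\ro$ reduces to the planar determinant of $A'$ and $B'$ inside $\xi^{\perp}$. If $\ro$ vanished identically, then $A'(s+t)$ and $B'(s-t)$ would be parallel to one fixed direction of $\xi^{\perp}$ for all $(s,t)$; restricting to $t=0$, where $A'-B'=N_t$ equals $\xi\por\alpha'$ by (\ref{isotermos}), the vector $\xi\por\alpha'$ would keep a constant direction, i.e. the orthogonal projection of $\alpha'$ onto $\xi^{\perp}$ would have constant direction. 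Differentiating, the projections of $\alpha'$ and $\alpha''$ would be proportional, giving $[\alpha',\alpha'',\xi]=\esiz\xi\por\alpha',\alpha''\esde\equiv 0$, against the hypothesis. Hence $\ro\not\equiv 0$ and $\psi$ is a bona fide indefinite improper affine map.

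Finally, for uniqueness I would invoke the remark preceding Theorem \ref{t4}: when $[\alpha',\alpha'',\xi]\neq 0$ the relations (\ref{admisible}) together with a degenerate metric along $\alpha$ determine the conormal $U$ uniquely as (\ref{conormal3}) with $\lambda=0$, hence determine $\Phi$ and, by analyticity, the map $\psi$ in a neighbourhood of $I$, exactly as in Theorem \ref{t1}.
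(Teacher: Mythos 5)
Your overall route is the intended one: the paper gives no proof of this theorem beyond calling it an ``extension of Theorem \ref{t4}'', and rerunning Theorems \ref{t1} and \ref{t4} with $\lambda\equiv 0$ is clearly what is meant. Your verification that $U=\alpha'\por\alpha''/[\alpha',\alpha'',\xi]$ makes $\{\alpha,U\}$ satisfy (\ref{admisible}) with $\lambda=0$, that consequently $\ro=\lambda/2=0$ along $I$ so that $\alpha(I)$ lies in the degeneracy locus of (\ref{maximm}), and your uniqueness argument (the three linear conditions on $U$ against the basis $\{\alpha',\alpha'',\xi\}$ force $U$, hence $\Phi$, hence $\psi$) are all correct. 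You also rightly isolate the one point the paper never addresses: that $[\Phi+\con{\Phi},\Phi_z,\con{\Phi_z}]$ must not vanish \emph{identically} for $\psi$ to be an improper affine map at all.

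But your argument for that non-degeneracy has a genuine gap. From $\det\bigl(A'(u),B'(v)\bigr)=0$ for all $u,v$ you infer that $A'$ and $B'$ both point in one fixed direction of $\xi^{\perp}$; this inference needs both $A'$ and $B'$ to be not identically zero. Since $A'-B'=\xi\por\alpha'\neq 0$ they cannot both vanish, but one of them can: along $I$ one has $2A'=U'+\xi\por\alpha'$ and $2B'=U'-\xi\por\alpha'$, and $U'\equiv\xi\por\alpha'$ is compatible with $[\alpha',\alpha'',\xi]\neq 0$. Concretely, take $\alpha(s)=(s,\,s^2/2,\,-s^3/6)$ (an orbit of the group (\ref{g1}) with $a=-1$): then $[\alpha',\alpha'',\xi]\equiv 1$, $U=(-s^2/2,\,s,\,1)$ and $U'=(-s,1,0)=\xi\por\alpha'$, so $B'\equiv 0$, $\Phi_z\por\con{\Phi_z}\equiv 0$, and the map (\ref{maximm}) collapses onto the curve itself, $\psi(s,t)=\alpha(s+t)$. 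In this case $A'=\xi\por\alpha'$ does not keep a constant direction, so your contradiction with $[\alpha',\alpha'',\xi]\neq 0$ never materializes --- and indeed it cannot, because for this curve the conclusion of the theorem actually fails under the paper's definition of indefinite improper affine map. The second half of your dichotomy (both $A'$, $B'$ nonzero somewhere, hence confined to a line, hence the projection of $\alpha'$ has constant direction, hence $[\alpha',\alpha'',\xi]\equiv 0$) is correct; what is missing, and not fixable from the stated hypothesis alone, is the exclusion of the exceptional curves with $U'\equiv\pm\,\xi\por\alpha'$. The statement needs this as an additional assumption, and your write-up should flag it rather than fold it into the generic case.
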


\begin{eje}
The curve $\alpha : \R \flecha \R^3$ given by
\begin{equation*}
\alpha(s) = (\cos(s), \sin(s), \cos(2s)),
\end{equation*}
has $[\alpha', \alpha'', \xi] = 1$ and generates the improper affine map in Figure 4.
\end{eje}

\begin{figure}
\begin{center}
\includegraphics[width=0.5\textwidth]{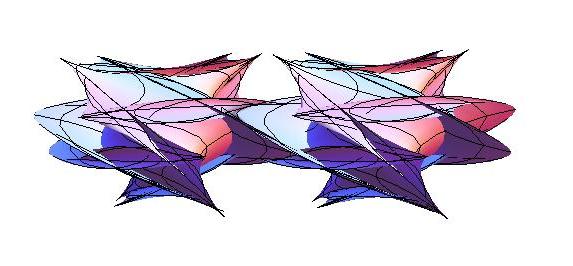}  \\
\caption{indefinite improper affine map.}
\end{center}
\end{figure}

\def\refname{References}


\begin{thebibliography}{99}

\bibitem[ACG]{ACG} J.A. Aledo, R.M.B. Chaves and J.A. Gálvez, The Cauchy Problem for Improper Affine Spheres and the Hessian One Equation, \emph{Transactions of the American Mathematical Society} \textbf{359} (2007), 4183-4208.

\bibitem[AMM]{AMM} J.A. Aledo, A. Martínez and F. Milán,  The Affine Cauchy Problem,  \emph{J. Math. Anal. Appl.} \textbf{351} (2009), no. 1, 70-83.

\bibitem[ACM]{ACM} L. J. Al\'{\i}as, R.M.B. Chaves and P. Mira, Bj\"{o}rling problem for maximal surfaces in Lorentz-Minkowski space, \emph{Math. Proc. Cambridge Philos. Soc.} \textbf{134} (2003), 289-316.

\bibitem[B]{B}  W. Blaschke, Vorlesungen über Differentialgeometrie II. Affine Differentialgeometrie. \emph{Springer}, 1923.

\bibitem[C1]{C1} E. Calabi, Improper affine hyperspheres of convex type and a generalization of a theorem by K. Jörgens, \emph{Michigan Math. J.} \textbf{5} (1958), 105-126.

\bibitem[C2]{C2} E. Calabi, Affine differential geometry and holomorphic curves, \emph{Lect. Notes Math.} \textbf{1422} (1990), 15-21.

\bibitem[CDM]{CDM} R.M.B. Chaves, M.P. Dussan and M. Magid, Bj\"{o}rling problem for timelike surfaces in the Lorentz-Minkowski space, \emph{J. Math. Anal. Appl.} \textbf{377} (2011), no. 1, 481-494.

\bibitem[CST1]{CST1} M. Craizer, M.A. da Silva and R.C. Teixeira, Area distances of convex plane curves and improper affine spheres, \emph{SIAM J. Imaging Sciences} \textbf{1} (2008), 209-227.

\bibitem[CST2]{CST2} M. Craizer, M.A. da Silva and R.C. Teixeira, A geometric representation of improper indefinite affine spheres, \emph{J. Geom} \textbf{100} (2011), 65-78.

\bibitem[CL]{CL} L. Caffarelli and Y. Li, An extension to a theorem of J\"{o}rgens, Calabi and Pogorelov, \emph{Comm. Pure Appl. Math.} \textbf{56} (2003), 549-583.

\bibitem[CY]{CY} S.Y. Cheng and S.T. Yau, Complete affine hyperspheres. Part I. The completeness of affine metrics, \emph{Comm. Pure Appl. Math.} \textbf{39} (1986), 839-866.

\bibitem[DHKW]{DHKW} U. Dieerkes, S. Hildebrandt, A. K\"{u}ster and O. Wohlrab,
 {\it Minimal Surfaces I. A series of comprehensive studies in mathematics,} Springer Verlag 295, Berlin 1992.

\bibitem[FMM1]{FMM1}  L. Ferrer, A. Martínez and F. Milán, Symmetry and uniqueness of parabolic affine spheres, \emph{Math. Ann.} \textbf{305} (1996), 311-327.

\bibitem[FMM2]{FMM2} L. Ferrer, A. Martínez and F. Milán, An extension of a theorem by K. Jörgens and a maximum principle at infinity for parabolic affine spheres, \emph{Math. Z.} \textbf{230} (1999), 471-486.

\bibitem[FMM3]{FMM3}  L. Ferrer, A. Martínez and F. Milán, The  space of parabolic affine spheres with fixed boundary, \emph{Monatsh. Math.} \textbf{130} (2000), 19-27.

\bibitem[GMM]{GMM} J.A. Gálvez, A. Martínez and F. Milán, Flat Surfaces in the Hyperbolic 3-Space, \emph{Math. Ann.} \textbf{316} (2000), 419-435.

\bibitem[GMMi]{GMMi} J.A. Gálvez, A. Martínez and P. Mira, The space of solutions to the Hessian one equation in the finitely punctured plane, \emph{Journal de Mathématiques Pures et Appliquées} \textbf{84} (2005), 1744-1757.

\bibitem[GMi]{GMi}  J.A. Gálvez and P. Mira, The Cauchy problem for the Liouville equation and Bryant surfaces, \emph{Advances in Mathematics} \textbf{195} (2005), 456-490.

\bibitem[Gu]{Gu}  B. Guan, The Dirichlet problem for Monge-Ampère equations in non-convex domains and spacelike hypersurfaces of constant Gauss curvature, \emph{Trans. Amer. Math. Soc.} \textbf{350} (1998), 4955-4971.

\bibitem[IM]{IM} G. Ishikawa and Y. Machida, Singularities of improper affine spheres and surfaces of constant Gaussian curvature,  \emph{Internat. J. Math.} \textbf{17(39)} (2006), 269-293.

\bibitem[IT]{IT} J. Inoguchi and M. Toda, Timelike minimal surfaces via loop groups, \emph{Acta Appl. Math.} \textbf{83(3)} (2004), 313-355.

\bibitem[J1]{J1} K. Jörgens, Über die Lösungen der differentialgleichung $rt-s^2=1$. \emph{Math. Ann.} \textbf{127} (1954), 130-134.

\bibitem[J2]{J2}  K. Jörgens, Harmonische Abbildungen und die Differentialgleichung $rt-s^2=1$, \emph{ Math. Ann.} \textbf{ 129} (1955), 330-344.

\bibitem[LJSX]{LJSX} A.M. Li, F. Jia, U. Simon and R. Xu, Affine Bernstein problems and Monge-Ampère equations, \emph{World Scientific} 2010.

\bibitem[LSZ]{LSZ} A.M. Li, U. Simon and G. Zhao, Global affine differential geometry of hypersurfaces, \emph{Walter de Gruyter} 1993.

\bibitem[LX]{LX} A.M. Li and R.W. Xu, A generalization of Jörgens-Calabi-Pogorelov theorem, \emph{J. Sichuan University} \textbf{44(5)} (2007), 1151-1152.

\bibitem[L]{L} J. C. Loftin, Survey on affine spheres, \emph{Handbook of Geometric Analysis. No. 2, Adv. Lect. Math., vol. 13, International Press}, Somerville, MA, 2010, 161-192.

\bibitem[M]{M} A. Martínez, Improper affine maps, \emph{Math. Z.} \textbf{249} (2005), 755-766.

\bibitem[MM]{MM} A. Martínez and F. Milán, On affine-maximal ruled surfaces, \emph{Math. Z.} \textbf{208} (1991), 635-644.

\bibitem[MR]{MR} M.A. Magid and P. Ryan, Flat affine spheres in $\R^3$, \emph{Geom. Dedicata} \textbf{33(3)} (1990), 277-288.

\bibitem[Mi]{Mi} F. Milán,  Singularities of improper affine maps and their Hessian equation,  \emph{J. Math. Anal. Appl.} (2013).

\bibitem[N]{N} D. Nakajo, A representation formula for indefinite improper affine spheres, \emph{Results Math.} \textbf{55} (2009), 139-159.

\bibitem[NS]{NS} K. Nomizu and T. Sasaki, Affine differential geometry, \emph{Cambridge University Press} 1994.

\bibitem[P]{P} A.V. Pogorelov, On the improper convex affine hyperspheres, \emph{Geom. Dedicata}  \textbf{1} (1972), 33-46.

\bibitem[TW]{TW} N.S. Trudinger and X.J. Wang, The Monge-Amp\`{e}re equation and its geometric applications, \emph{Handbook of Geometric Analysis. No. 1, Adv. Lect. Math., vol. 7, International Press}, Somerville, MA, 2008, 467-524.


\end{thebibliography}
\end{document}